\newtheorem{theorem}{Theorem}[section]
\newtheorem{problem}[theorem]{Open Problem}
\newcommand{\cref}[1]{Corollary~\textup{\ref{cor:#1}}}
\newcommand{\comment}[1]{}
\DeclareMathOperator{\SR}{SR}
\begin{document}

\title{Update on Sidon-Ramsey Numbers}

\author{Manuel A. Espinosa-Garc\'ia\\
Centro de Ciencias Matem\'aticas\\
Universidad Nacional Aut\'onoma de M\'exico\\
Morelia, Mich. M\'exico \\
and \\
Daniel Pellicer\\
Centro de Ciencias Matem\'aticas\\
Universidad Nacional Aut\'onoma de M\'exico\\
Morelia, Mich. M\'exico
}

\date{ \today }
\maketitle

\begin{abstract}
We provide two new exact Sidon-Ramsey numbers to the list known so far. We also improve the upper bounds of the next two Sidon-Ramsey numbers. In doing so, we comment on the tendencies we found on the Sidon-Ramsey partitions that were studied to obtain these results.

\vskip.1in
\medskip
\noindent
Key Words: Sidon set, Ramsey theory, Sidon-Ramsey partition, Sidon-Ramsey numbers.

\medskip
\noindent
AMS Subject Classification (2010):  Primary: 11B75.  Secondary:  05D10.

\end{abstract}

\section{Background}


A \textit{Sidon set} is a subset $S$ of an additive group such that all pairwise sums are distinct, i.e., if $a+b=c+d$, for some $a,b,c,d\in S$, then $\{a,b\}=\{c,d\}$. The most studied problems in this area are the following. One of them is to find the maximum size of a Sidon set contained in $[n]:=\{1,2,\dots,n\}$ as subset of $\mathbb{Z}$, and the maximal density of a Sidon set in the set of positive integers in the group $\mathbb{Z}$. $F_{2}(n)$ is defined as the size of the largest Sidon set contained in the interval $[n]$, and it is called the \textit{Sidon number} of $n$. It is known that 
\[n^{1/2}(1-o(1))\leq F_{2}(n)\leq n^{1/2}+O(n^{1/4}).\]
The lower bound is inferred from a construction made by Singer of Sidon sets (see \cite{Sin38}), and the upper bound was proved by Erd\H os and Turán (see \cite{ET41}). Some of the first exact values known for $F_{2}(n)$ are included in Table \ref{table:sidonvalues}.

\begin{table}[h]\centering
\begin{tabular}{cccccccc}
	$n$&$[1]$&$[2,3]$&$[4,6]$&$[7,11]$&$[12,17]$&$[18,25]$&$[26,34]$\\

	$F_{2}(n)$&1&2&3&4&5&6&7\\
	
	&&&&&&&\\
	
	$n$&$[35,44]$&$[45,55]$&$[56,72]$&$[73,85]$&$[86,106]$&$[107,127]$&$[128,151]$\\
	
	$F_{2}(n)$&8&9&10&11&12&13&14
\end{tabular}
\caption{Values of $F_{2}(n)$ for small $n$. \label{table:sidonvalues}}

\end{table}

The density problem for the Sidon numbers consists of maximizing the length of an interval that can be partitioned into $k$ Sidon sets, we call such division a \textit{Sidon-Ramsey partition}. $\SR(k)$ is defined as the minimum $n$ such that there is no Sidon-Ramsey partition of $[n]$ in $k$ parts, and it is called the \textit{Sidon-Ramsey number} of $k$. These numbers were introduced by Liang, Li, Xiu y Xu in \cite{LLXX13}. The Sidon-Ramsey numbers satisfy
\[k^{2}-O(k^{c})\le SR(k)\le k^{2}+Ck^{3/2}+O(k),\]
where $c\le 1.525$ and $C\le 1.996$ (see \cite{EMRS23}). Table \ref{table:sidonramseyvalues} lists the previously known values of $\SR(k)$, that is, those for $k\leq 5$. It also includes the bounds for the next values of $k$ given in \cite{XLL18}.

\begin{table}[h]\centering
	
	\begin{tabular}{ccccc}
		$\SR(1)=3$, & $\SR(2)=8$, & $\SR(3)=15$, & $\SR(4)=25$, & $\SR(5)=36$,\\
	\end{tabular}

	\vspace{0.15in}

	\begin{tabular}{c|c|c|c|c|c|c}
		$k$ & 6 & 7 & 8 & 9 & 10 & 11\\
		\hline
		upper bound for $SR(k)$ &55&70&97&118&141&166\\
		lower bound for $SR(k)$ &50&65&81&97&114&133\\
	\end{tabular}
	
	\caption{$\SR(k)$ for $k\leq 5$, and $\SR(k)$ bounds for $6\le k\le 11$. \label{table:sidonramseyvalues}}
\end{table}

In this paper we establish the values of the next two Sidon-Ramsey numbers; they are given in Section \ref{s:exact}. Before that, in Section \ref{s:routines} we describe the procedures implemented in python to find Sidon sets on a given finite subset of $\mathbb{Z}$. Then, in Section \ref{s:Further} we improve the bounds of $SR(8)$ and of $SR(9)$. We conclude with some  remarks and open problems in Section \ref{s:Conclusions}.

\section{Description of routines}\label{s:routines}


In this section we describe the procedures we implemented in python in order to find Sidon sets of prestablished sizes on given subsets of $\mathbb{Z}$. When determining the exact values of Sidon-Ramsey numbers we used both procedures in separate computers to validate the results.

For convenience, in this paper we will abbreviate `Sidon set with $k$ elements' by $k$-SS.

\subsection{Using Sidon sets on smaller subsets of $\mathbb{Z}$}

This algorithm finds all Sidon sets with at most $k$ elements in the set $[n]$. We build the Sidon sets recursively, using Sidon sets in $[m]$ to build Sidon sets in $[m+1]$.

First, for $k \ge 1$ we consider all Sidon sets in the interval $[1]$ with at most $k$ element, and denote this set by $S_{1,k}$. Notice that $S_{1,k}$ consists exclusively of $\varnothing$ and $\{1\}$. Recursively, we build the sets with all the Sidon sets with at most $k$ elements in $[t]$, denoted by $S_{t,k}$, as follows:

\begin{enumerate}
	\item We add all the elements in $S_{t-1,k}$ to $S_{t,k}$.
	
	\item To each Sidon set in $S_{t-1,k}$ with at most $k-1$ elements we add the element $t$. If the new set is a Sidon set, we add it to $S_{t,k}$.
\end{enumerate}


\subsection{Directly constructing all Sidon sets of a given size on a given subset of $\mathbb{Z}$}

Next we explain how we find all $k$-SS's in the set $[n]$. It suffices to determine a way to obtain all $k$-SS's that contain $1$ and $n$, since all others will be translates of some $k$-SS's obtained by the same procedure in subsets $[m]$ for some $m<n$.

Let $X=\{x_1,\dots,x_k\}$ with $x_1=1$ and $x_k=n$ be a $k$-SS where $x_i < x_j$ if $i<j$. Then $X$ is completely determined by the $(k-1)$-tuple $(y_1,\dots,y_k) := (x_2-x_1,x_3-x_2,\dots,x_k-x_{k-1})$. The numbers $y_k$ satisfy the following properties.
\begin{enumerate}
 \item If $i\ne j$ then $y_i \ne y_j$. More generally, if $i_1 \le i_2$, $i_3 \le i_4$ and $\{i_1,i_2\} \ne \{i_3,i_4\}$ then
  $$\sum_{j=i_1}^{i_2} y_j \ne \sum_{j=i_3}^{i_4} y_j.$$
 \item $\displaystyle{\sum_{i=1}^{k-1}} y_i = n-1$.
\end{enumerate}
Conversely, any set $\{y_1,\dots,y_{k-1}\}$ satisfying the above properties induces the Sidon set $\{1,1+y_1,1+y_1+y_2,\dots,1+\sum_{i=1}^{k-1} y_i\}$.

Our strategy consists of two steps.
\begin{itemize}
 \item Find all ordered sets $(y_1,\dots,y_{k-1})$ of numbers such that their sum is $n-1$ and that $y_i<y_{i+1}$ for all $i$.
 \item For each of the sets in the previous step determine which of the permutations of its entries induces a Sidon set (and so we only need to verify that they satisfy the first item above).
\end{itemize}

Clearly this procedure becomes too slow when the values of $k$ and $n$ increase. As an example, our implementation in python took less than a second to find the 96 $8$-SS in [38] that contain $1$ and $38$, and it took a couple of seconds to find the 195 $9$-SS in [50] that contain $1$ and $50$. When asked to determine all $13$-SS in the set $[107]$, followed by those in $[108]$ and by those in $[109]$ the three results were ready only after $19$ hours (the outcome is that there are two that contain $1$ and $107$ and none that contain either $1$ and $108$, or $1$ and $109$). As expected, the computation of the $13$-SS in the sets $[110]$ and $[111]$ that contain both endpoints of the intervals was even lengthier.

\section{New exact numbers}\label{s:exact}


In this section we establish the value of two Sidon sets that were not previously known.

\begin{theorem}
The number $SR(6)$ is $51$.
\end{theorem}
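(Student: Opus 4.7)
The plan splits into proving $SR(6)\ge 51$ by exhibiting a Sidon-Ramsey partition of $[50]$ into six parts, and proving $SR(6)\le 51$ by ruling out every such partition of $[51]$.

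For the lower bound I would run the backtracking implicit in Section \ref{s:routines} (enumerating Sidon sets block by block) until an explicit six-part partition of $[50]$ is produced, and then display it. Each block can be verified directly against the Sidon condition, settling $SR(6)>50$.

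The upper bound is the substantial part. The key structural input is that $F_2(51)=9$, since $51\in[45,55]$ in Table \ref{table:sidonvalues}; so every block of a hypothetical partition $[51]=S_1\sqcup\cdots\sqcup S_6$ satisfies $|S_i|\le 9$. Combined with $\sum_i|S_i|=51$, the pigeonhole bound $51-6\cdot 8=3$ forces at least three of the $S_i$ to have size exactly $9$, a sharp restriction at the top of the search tree. I would enumerate all $9$-SS in $[51]$ via the second routine of Section \ref{s:routines}, then in a depth-first search branch on the smallest element of $[51]$ not yet assigned to any block, listing all Sidon subsets of the unused elements that contain it and have size at most $9$. Branches are pruned whenever fewer than three size-$9$ blocks remain achievable, or whenever the remaining per-block budget of $9$ is insufficient to cover all unassigned elements with the remaining classes.

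The principal obstacle is the size of the search tree, which is why both the cardinality bound $|S_i|\le 9$ and the three-blocks-of-nine condition need to be exploited aggressively; without them a raw enumeration would be hopeless. As indicated in Section \ref{s:routines}, the search would be executed with the two independently written python implementations for cross-validation. If both implementations exhaust the tree without producing any six-part Sidon partition of $[51]$, we conclude $SR(6)\le 51$, and together with the explicit construction for $[50]$ this yields $SR(6)=51$.
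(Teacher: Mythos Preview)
Your proposal is correct and matches the paper's approach: the same explicit-partition argument for the lower bound, and for the upper bound the same key observations that $F_2(51)=9$ and that at least three blocks must be $9$-SS's, followed by an exhaustive computer search anchored on those $9$-SS's. The paper organizes the search slightly differently (it first lists all $12{,}094$ disjoint triples of $9$-SS's in $[51]$ and then shows none extends by four disjoint $8$-SS's) and also records a second independent verification via the uniqueness of the Sidon-Ramsey partition of $[50]$, but the substance is the same as what you outline.
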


\begin{proof}
The lower bound is given by the following partition of $[50]$:
\begin{align*}
&\{4,6,14,28,29,33,40,46,49\},&
&\{2,5,11,18,22,23,37,45,47\},\\
&\{3,7,8,24,30,32,39,42\},&
&\{1,13,15,26,31,34,35,41\},\\
&\{9,12,19,21,27,43,44,48\},&
&\{10,16,17,20,25,36,38,50\}.
\end{align*}

We verified the upper bound in two distinct ways.

First, we found all triples of mutually disjoint $9$-SS's in the set $[51]$. There are $12,094$ triples, and for each of them we did an exhaustive search to determine that it cannot be completed to a Sidon-Ramsey partition of $[51]$. The search was carried out by finding all $8$-SS's in the complement in $[51]$ of each triple and determining whether four of them are mutually disjoint. We also verified that there is no quadruple of mutually disjoint $9$-SS's, and it was previously known that there are no $10$-SS's in $[51]$ (see Table \ref{table:sidonvalues}). Therefore there are no partitions of $[51]$ into $6$ Sidon sets that do not have three $9$-SS's or three $8$-SS's.

Second, with the same techniques described above (with pairs of $9$-SS's instead of triples) we made an exhaustive search to establish that there is only one Sidon-Ramsey partition of $[50]$ (the one in the displayed equation). If there existed a Sidon-Ramsey partition of $[51]$ then it would be possible to obtain that one from the one in $[50]$ by adding $51$ to one of the parts. However, none of the parts remains a Sidon set when the number $51$ is added.
\end{proof}

\begin{theorem}
The number $SR(7)$ is $66$.
\end{theorem}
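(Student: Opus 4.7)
The plan mirrors the proof of the previous theorem.

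For the lower bound $SR(7) \ge 66$, I would exhibit an explicit partition of $[65]$ into seven Sidon sets. Table~\ref{table:sidonvalues} gives $F_2(65)=10$, and the counting $k \cdot 10 + (7-k)\cdot 9 \ge 65$ shows that any such partition has at least two parts of size $10$. A natural first target is the size pattern $(10,10,9,9,9,9,9)$, which is forced when exactly two parts have size $10$. Using the routines of Section~\ref{s:routines} I would enumerate the $10$-SS's in $[65]$ and backtrack over disjoint choices, completing the five remaining $9$-SS's by an exhaustive subroutine; if no partition of this pattern exists, I would widen the search to patterns with more $10$-SS's, such as $(10,10,10,9,9,9,8)$.

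For the upper bound $SR(7) \le 66$, the same counting argument applied in $[66]$ forces at least three $10$-SS's in any seven-part Sidon-Ramsey partition, because $2\cdot 10 + 5\cdot 9 = 65 < 66$. It therefore suffices to: (i) enumerate all triples of mutually disjoint $10$-SS's in $[66]$; (ii) for each such triple, exhaustively search the $36$-element complement for a partition into four further disjoint Sidon sets. If every such search fails, no seven-part partition of $[66]$ exists. As an independent cross-check, parallel to the second verification of the previous theorem, I would enumerate all Sidon-Ramsey partitions of $[65]$ into seven parts and confirm that $66$ cannot be appended to any of their parts without destroying the Sidon property; any seven-part Sidon-Ramsey partition of $[66]$ would otherwise, upon deleting $66$, yield such an extension.

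The main obstacle is purely computational. The number of $10$-SS's in $[66]$, and hence of disjoint triples, is expected to dwarf the $12{,}094$ triples of $9$-SS's used in the $SR(6)$ argument, and each inner search over a $36$-element universe for four disjoint Sidon sets is itself nontrivial. Progress will depend on aggressive pruning: for instance, using the reflection symmetry $x \mapsto 67-x$ to fix the triple containing the element $1$, exploiting the fact that the four complementary parts must sum exactly to $36$ (leaving only the size profile $(9,9,9,9)$), and aborting any partial partition as soon as some small uncovered integer admits no further Sidon extension in what remains.
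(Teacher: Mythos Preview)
Your plan matches the paper's approach almost exactly: the paper exhibits a $(10,10,9,9,9,9,9)$ partition of $[65]$ for the lower bound, and for the upper bound enumerates the disjoint triples of $10$-SS's in $[66]$ (there turn out to be $1\,601\,160$ of them) and checks that none is completed by four disjoint $9$-SS's in the complement.

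There is one genuine slip in your pruning paragraph. You assert as a ``fact'' that the four complementary parts must have size profile $(9,9,9,9)$. That is only true when \emph{exactly} three parts of the partition have size $10$. If four (or more) of the seven parts are $10$-SS's, then choosing any three of them as your triple leaves a $36$-element complement whose four-part Sidon partition has profile $(10,9,9,8)$, $(10,10,8,8)$, etc., and your $(9,9,9,9)$ search would miss it entirely. The paper closes this gap by a separate case analysis: it reports $12\,435$ quadruples and $0$ quintuples of mutually disjoint $10$-SS's in $[66]$, and checks that none of the quadruples extends to a seven-part partition. You need to add this step (or else keep your inner search over \emph{all} four-part Sidon partitions of the complement, as your step~(ii) originally states before the pruning remark contradicts it).

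Your proposed cross-check via enumerating all seven-part partitions of $[65]$ is not carried out in the paper for $SR(7)$; indeed, the paper later only \emph{conjectures} that its list of balanced partitions of $[65]$ is complete, so this route may be at least as expensive as the direct one.
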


\begin{proof}
The following partition is a witness of the lower bound:
\begin{align*}
&\{1, 3, 15, 22, 30, 33, 46, 50, 55, 56\},& \{2, 9, 17, 21, 26, 27, 47, 49, 60, 63\},\\
&\{4, 7, 13, 23, 24, 28, 36, 54, 61\},& \{5, 12, 14, 20, 34, 44, 45, 57, 62\},\\
&\{6, 10, 16, 29, 38, 41, 43, 58, 59\},& \{8, 25, 31, 32, 35, 40, 51, 53, 65\},\\
&\{11, 18, 19, 37, 39, 42, 48, 52, 64\}.&
\end{align*}

The upper bound was verified by finding all triples of mutually disjoint $10$-SS's in the set $[66]$. There are $1601160$ such triples, and for each of them we did an exhaustive search to determine that the triple cannot be completed to a Sidon-Ramsey partition of $[66]$ with the addition of four disjoint $9$-SS's. Besides, there are $12435$ and $0$ quadruples and quintuples of mutually disjoint $10$-SS's in the set $[66]$, and none of them can be extended to a Sidon-Ramsey partition of $[66]$.
\end{proof}

\section{Further results}\label{s:Further}


Our techniques and computational resources seem not to be enough to determine $SR(8)$, but we improved the bounds in Table \ref{table:sidonramseyvalues} as shown in the following results.

\begin{theorem}
	The Sidon-Ramsey number with $k=8$ satisfies $\SR(8)\leq 86$.
\end{theorem}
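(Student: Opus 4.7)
The plan is to adapt the strategy used for $\SR(6)=51$ and $\SR(7)=66$ to the larger target $[86]$. Since $\SR(8)\le 86$ is equivalent to the statement that $[86]$ admits no partition into $8$ Sidon sets, the whole proof reduces to verifying the non-existence of such a partition.

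First I would exploit the size restrictions forced by Table \ref{table:sidonvalues}. Since $F_2(86)=12$, every part of a hypothetical partition has at most $12$ elements, and the eight parts sum to $86$. A short counting argument then separates the partitions into structural cases: either the partition contains at least one $12$-SS, or all parts have size at most $11$, in which case at least six of them have size exactly $11$ (because $8\cdot 11 - 86 = 2$). These cases give a manageable number of possible size profiles to attack.

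Next, using the routines of Section \ref{s:routines}, I would enumerate all $12$-SS's and $11$-SS's in $[86]$. Then, following the blueprint of the proofs in Section \ref{s:exact}, I would compute all small families of mutually disjoint large Sidon sets (pairs, triples, or more) and for each family carry out an exhaustive backtracking search in the uncovered complement, looking for a completion into Sidon sets of the remaining prescribed sizes. In the case in which no $12$-SS occurs, one fixes several disjoint $11$-SS's and searches for completions whose remaining parts are $11$-SS's or smaller.

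The main obstacle is computational cost. The number of disjoint large-Sidon-set families in $[86]$ is dramatically larger than the $1{,}601{,}160$ triples handled for $\SR(7)=66$, so a naive search would be infeasible. Aggressive pruning will be essential: one should order the search so that the most restrictive slots (the largest remaining parts) are filled first, and should reject any partial configuration as soon as its uncovered complement fails to contain enough disjoint Sidon sets of the required sizes to complete the partition. If even this pruned search is too expensive, I would expect the authors to shrink the target further, e.g.\ verifying the non-existence claim for a carefully chosen subinterval of $[86]$ and invoking the monotonicity of the property ``$[n]$ admits an $8$-partition into Sidon sets''.
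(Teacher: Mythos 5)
Your proposal matches the paper's proof in all essentials: bound every part by $12$ via $F_2(86)=12$, split into the case where a $12$-SS appears and the case where all parts have size at most $11$ (forcing at least six $11$-SS's since $8\cdot 11-86=2$), then enumerate the $12$-SS's and $11$-SS's and exhaustively check whether any disjoint family extends to a full partition. The only difference is that your feared combinatorial explosion does not materialize --- there are just two $12$-SS's in $[86]$ (which intersect), $3266$ relevant quadruples and $4030$ sixtuples of disjoint $11$-SS's --- so the fallback you sketch at the end is unnecessary.
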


\begin{proof}
From Table \ref{table:sidonvalues} we know that the size of each part of a Sidon set in $[86]$ is at most $12$. There are two $12$-SS's in $[86]$ and they have non empty intersection, forcing any partition of $[86]$ into eight Sidon sets to have at most one $12$-SS. Also, we found the $102484$ $11$-SS's in $[86]$. In order to find partitions of $[86]$ into eight Sidon sets with one of them of size $12$, we needed to complete the $12$-SS with at least four mutually disjoint $11$-SS's. There are $3266$ quadruples of disjoint $11$-SS's in the complement of each $12$-SS, and none of them can be completed to an $8$ Sidon-Ramsey partition of $[86]$ (it is not possible to add another $11$-SS's to any of this quatruples, and it is neither possible complete with a triple of $10$-SS's). If we don't use $12$-SS, we need to use at least six disjoint $11$-SS's. We found $4030$ sixtuples of disjoint $11$-SS's, none of whose complements contains a $10$-SS's nor an $11$-SS's. We conclude there is no $8$ Sidon-Ramsey partition of $[86]$.
\end{proof}

\begin{theorem}
	The Sidon-Ramsey number with $k=9$ satisfies $\SR(9)\leq 111$.
\end{theorem}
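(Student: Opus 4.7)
The plan is to follow the strategy used for $\SR(8) \le 86$. By Table~\ref{table:sidonvalues} we have $F_2(111) = 13$, so every part of a Sidon-Ramsey $9$-partition of $[111]$ has size at most $13$. Since $9\cdot 12 = 108 < 111$, at least three of the parts must be $13$-SS's; moreover, if exactly three have size $13$, then the remaining six are all $12$-SS's. At most eight parts can be $13$-SS's, since $9\cdot 13 = 117 > 111$.

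The first step is to enumerate all $13$-SS's in $[111]$ using the routine of Section~\ref{s:routines}. This is the main computational obstacle: the remarks at the end of Section~\ref{s:routines} report that the analogous enumeration for $n \in \{107,108,109\}$ already took $19$ hours, and the jobs for $n \in \{110,111\}$ were observed to be even slower. Restricting to $13$-SS's that contain $1$ and reconstructing the rest via translations from smaller intervals should reduce the cost considerably.

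With the list of $13$-SS's at hand, I would compute all mutually disjoint triples, and for each triple search inside its $72$-element complement for six mutually disjoint $12$-SS's that partition the complement; a negative outcome for every triple rules out the case of exactly three $13$-SS's. The case analysis then continues for $t = 4,5,6,7,8$ parts of size $13$: for $t=4$ the remaining five parts must split $59$ as $12\shp 12\shp 12\shp 12\shp 11$; for $t=5$ the remaining four parts split $46$ as either $12\shp 12\shp 12\shp 10$ or $12\shp 12\shp 11\shp 11$; for $t=6$ the remaining three split $33$ as $12\shp 12\shp 9$, $12\shp 11\shp 10$, or $11\shp 11\shp 11$; and so on. In each case the same extension search applies, with the relevant $k$-SS's for $k<13$ obtained by the same routine.

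As in the proof for $\SR(8)$, the expectation is that the number of mutually disjoint $r$-tuples of maximum-size Sidon sets collapses sharply as $r$ grows, so the higher-$t$ cases should be much faster than the $t=3$ case. The main obstacle is therefore the initial enumeration of $13$-SS's in $[111]$ together with their disjoint triples; if those two lists turn out to be manageable, the exhaustive extension checks should terminate in the same spirit as the computations that established $\SR(6)=51$, $\SR(7)=66$, and $\SR(8)\le 86$.
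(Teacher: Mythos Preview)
Your overall strategy matches the paper's, but you are planning far more work than is actually needed. The paper's computation shows that there are only twenty-eight $13$-SS's in $[111]$ in total (two in $[107]$ plus eight translates, six in $[110]$ containing both endpoints plus six translates, and six in $[111]$ containing both endpoints; none in $[108]$ or $[109]$ using both endpoints). The decisive fact is that \emph{no three of these twenty-eight sets are pairwise disjoint}. Since, as you correctly argue, any Sidon-Ramsey $9$-partition of $[111]$ must contain at least three $13$-SS's, this single observation already finishes the proof.

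Consequently your entire extension phase --- searching the $72$-element complements for six disjoint $12$-SS's, and the cascade of cases $t=4,5,6,7,8$ with their various size splits --- never arises: the list of disjoint triples is empty, so there is nothing to extend. Your plan is not wrong, and your remark that the number of disjoint $r$-tuples ``collapses sharply as $r$ grows'' is exactly right; it simply collapses to zero already at $r=3$. The genuine computational bottleneck is the one you identified, namely enumerating the $13$-SS's in $[110]$ and $[111]$, after which only a trivial $\binom{28}{3}$ disjointness check remains.
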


\begin{proof}
From Table \ref{table:sidonvalues} we know that the size of each part of a Sidon set in $[111]$ is at most $13$. There are twenty eight $13$-SS's in $[111]$ distributed as follows:
\begin{itemize}
 \item Two $13$-SS's in $[107]$ plus their eight translates.
 \item Six $13$-SS's in $[110]$ that contain $1$ and $110$, plus their six translates.
 \item Six $13$-SS's in $[111]$ that contain $1$ and $111$.
\end{itemize}
(There are no $13$-SS's in $[108]$ or $[109]$ that use both ends of the interval.)
In order to partition $[111]$ into $9$ Sidon sets we need at least three $13$-SS's. However, no triple amont the twenty eight $13$-SS's in $[111]$ above is disjoint.
\end{proof}

A Sidon-Ramsey partition is \textit{balanced} if the sizes of any pair of parts differ in at most $1$, and it is \textit{strongly-balanced} if the sizes are all the same. In \cite{She98} they try to find small intervals that contains many disjoint Sidon sets of the same size, not necessarily making a partition. When this constructions make a partition of some set $[n]$, it is an example of a strongly-balanced Sidon-Ramsey partition. In \cite{LLXX13} and \cite{XLL18} Sidon-Ramsey partitions of $[SR(k)-1]$ in $k$ parts are given, for $k\le 5$; they are all balanced partitions. 

The above discussion suggests that Sidon-Ramsey partitions of $[\SR(k)-1]$ are balanced, but as we shall see, this is not the case. So far we have found $5$ balanced Sidon-Ramsey partitions of $[65]$ with $7$ parts each:

\begin{align*}
 &\{1,3,15,22,30,33,46,50,55,56\},&
&\{2,9,17,21,26,27,47,49,60,63\},\\
&\{4,7,13,23,24,28,36,54,61\},&
&\{5,12,14,20,34,44,45,57,62\},\\
&\{6,10,16,29,38,41,43,58,59\},&
&\{8,25,31,32,35,40,51,53,65\},\\
&\{11,18,19,37,39,42,48,52,64\}.&
\end{align*}
\begin{align*}
 &\{1,3,15,22,30,33,46,50,55,56\},&
&\{2,6,14,24,27,29,38,57,58,64\},\\
&\{5,8,23,28,39,40,47,49,53\},&
&\{10,12,13,25,34,41,45,51,59\},\\
&\{7,16,20,21,36,42,44,54,61\},&
&\{9,11,17,32,43,48,52,62,65\},\\
&\{4,18,19,26,31,35,37,60,63\}.&
\end{align*}
\begin{align*}
 &\{2,4,16,23,31,34,47,51,56,57\},&
&\{7,10,15,19,33,43,44,50,63,65\},\\
&\{5,12,18,21,29,39,54,58,59\},&
&\{9,11,17,28,37,40,41,55,62\},\\
&\{8,13,14,22,24,42,45,49,64\},&
&\{3,20,26,27,30,35,46,48,60\},\\
&\{1,6,25,32,36,38,52,53,61\}.&
\end{align*}
\begin{align*}
 &\{2,4,16,23,31,34,47,51,56,57\},&
&\{3,9,10,29,38,40,43,53,61,65\},\\
&\{5,8,22,24,37,44,45,49,55\},&
&\{14,21,27,32,35,36,52,62,64\},\\
&\{7,11,12,26,39,42,48,50,60\},&
&\{1,13,19,20,28,30,33,54,58\},\\
&\{6,15,17,18,25,41,46,59,63\}.&
\end{align*}
\begin{align*}
 &\{3,5,17,24,32,35,48,52,57,58\},&
&\{2,6,14,16,19,37,38,44,53,64\},\\
&\{7,10,25,30,41,42,49,51,55\},&
&\{1,13,15,18,31,39,40,46,50\},\\
&\{4,21,22,26,28,36,47,56,59\},&
&\{8,11,23,27,29,34,54,62,63\},\\
&\{9,12,20,33,43,45,60,61,65\}.&
\end{align*}

These partitions have two parts of $10$ elements each, and five parts with $9$ elements each.
We conjecture that these and their reflected partitions (constructed by including the numbers $66-x$ instead of $x$ in each part) are all balanced Sidon-Ramsey partitions with those parameters. On the other hand, this is the first $k$ for which there are non-balanced Sidon-Ramsey partitions with $k$ parts in $[SR(k)-1]$. There is only one, namely

\begin{align*}
 &\{1,3,15,22,30,33,46,50,55,56\},&
&\{2,9,17,21,26,27,47,49,60,63\},\\
&\{4,7,12,16,31,41,42,48,62,64\},&
&\{5,11,14,28,32,39,44,52,54\},\\
&\{6,19,23,24,34,43,57,59,65\},&
&\{8,10,20,36,37,40,45,51,58\},\\
&\{13,18,25,29,35,38,53,61\}.&
\end{align*}
It has three parts with $10$ elements each, three parts with $9$ elements each and one part with $8$ elements.
Uniqueness was verified by determining all disjoint triples and quadruples of $10$-SS's in $[65]$ and analyzing one by one whether they could be extended to a Sidon-Ramsey partition with $7$ parts. This was enough, since there is no $5$-tuple of disjoint $10$-SS's in $[65]$.

The previous discussion naturally leads to the following open problems.

\begin{problem}
Is it true that for any positive integer $t$ there exists a Sidon-Ramsey partition in $k$ parts of $[\SR(k)-1]$ such that a pair of parts differs in size by $t$?
\end{problem}

\begin{problem}
Is there any positive integer $k$ such that there are more non-balanced Sidon-Ramsey partitions than balanced Sidon-Ramsey partitions in $k$ parts of $[\SR(k)-1]$?
\end{problem}

The Sidon-Ramsey partitions obtained so far show the following tendency when they are not strongly-balanced. The large Sidon sets of the partition do not include $1$ and $n$ simultaneously. Furthermore, if one of those large Sidon sets includes $1$ then the density of the small numbers of the part seems to be lower than the density of the large numbers of the part; for example, $\{1,3,15,22\}$ compared with $\{46,50,55,56\}$ in the first part of the first two balanced Sidon-Ramsey partitions of $[65]$. An analogous behavior can be observed for those large parts containing $n$ of a Sidon-Ramsey partition of $[n]$.

Intuitively, one can think that it is easier to find $k$-SS's in $[m+\ell]$ than in $[m]$ (assuming $\ell \ge 1$). This suggests that if we are told to bet on a given $d$-tuple of disjoint Sidon sets (say the large ones in the partition) so that they can be completed to a Sidon-Ramsey partition of $[n]$, then we should improve the chances of success if we manage to choose those $d$ sets so that they contain neither $1$ nor $n$. In that way, the remaining Sidon sets (say, the smaller ones) must be chosen within a larger interval (although the evidence given by the $5$ balanced partitions of $[65]$ shown above does not support this guess).

Based on the optimal Sidon-Ramsey partitions known so far, it seems like the chances of success are higher if the sets in the $d$-tuple mentioned above cover only a few numbers in the two ends of the interval $[n]$. For example, when we look for the numbers $\{1,2,3,4,5,61,62,63,64,65\}$ in the five balanced partitions of $[65]$ shown above (so that we take the five smallest and the five largest ones), the two large sets include $4$ of those extreme numbers in most of the cases, and only in one of them they include $5$ of these numbers. In comparison, a random pair of disjoing $10$-SS in $[65]$ contains $6$ or more of those numbers. The number $5$ for picking small and large numbers was chosen here since there are precisely $5$ more parts to be chosen (the Sidon sets with $9$ elements), but the evaluation is not very different if we choose the first and last six or seven numbers of $[65]$.
If true, this idea can be used to improve the lower bounds for $SR(k)$ by searching for large parts of the partition by favoring tuples that do not concentrate near $1$ or near $n$.

\section{Conclusions}\label{s:Conclusions}


While searching for Sidon-Ramsey partitions we realized how relevant it is to know all $k$-SS's in $[n]$ for the smallest values of $n$ for which they exist.

Denote by $g_k(n)$ the number of $k$-SS's in $[n]$ that contain $1$ and $n$. For $k \le 9$ the values of $g_k(n)$ for small $n$ strongly suggest that these are non-decreasing funcions.
It was striking to us that $g_{10}(n)$ is not non-decreasing, since $g_{10}(56)=2$ whereas $g_{10}(57)=g_{10}(58)=0$. Our intuition suggested us that if there are $10$-SS's in $[56]$ that use $1$ and $56$ then there should also be $10$-SS's in $[57]$ that use $1$ and $57$, since there is a little more space in $\{2,\dots,56\}$ to accomodate $8$ numbers to complete a $10$-SS with $1$ and $57$, in comparison with $\{2,\dots,55\}$ to be completed with $1$ and $56$.

The phenomenon of $g_k(n)$ decreasing to $0$ repeats with $g_{11}(n)$, $g_{12}(n)$ and $g_{13}(n)$. The function $g_{11}(n)$ equals $0$ if $n<73$, while $g_{11}(73)=4$ and $g_{11}(74)=0$ (for every $n>74$ the value of $g_{11}(n)$ is positive). Similarly, $g_{12}(n)=0$ if $n<86$ while $g_{12}(86)=2$, $g_{12}(87)=g_{12}(88)=g_{12}(89)=g_{12}(90)=0$, $g_{12}(91)=2$ and from there on $g_{12}(n)$ seems to be strictly increasing. Finally, $g_{13}(n)=0$ if $n<107$ while $g_{13}(107)=2$ and $g_{13}(108)=g_{13}(109)=0$ (curiously enough, $g_{13}(110)=g_{13}(111)=6$ so that $g_{13}$ is not even strictly increasing after the first two non-zero values).

\begin{problem}
Which are the numbers $k$ for which the function $g_k(n)$ just defined is non-decreasing?
\end{problem}

The number of known exact values of $SR(k)$ is very small to have much intuition of the nature of the Sidon-Ramsey partition attaining those numbers. Here we were able to improve the upper bounds of two more Sidon-ramsey numbers. We hope that soon new clever constructions of Sidon-Ramsey partitions will help improving the lower bounds as well (or suggesting that they are sharp).

\section*{Acknowledgments}

The second author was supported by PAPIIT-UNAM under project grant IN104021 and by
CONACYT ``Fondo Sectorial de Investigaci\'on para la Educaci\'on'' under grant A1-S-10839.

\bibliographystyle{amsplain}
\bibliography{refs.bib}

\end{document}